\documentclass[a4paper,12pt]{amsart}

\usepackage[margin=2.0cm]{geometry}
\usepackage{amsthm,amsmath,amssymb}
\usepackage{mathtools}
\usepackage[ruled,linesnumbered]{algorithm2e}
\usepackage[utf8]{inputenc}
\usepackage[english]{babel}
\usepackage[linktocpage]{hyperref}
\usepackage{xcolor}
\hypersetup{colorlinks=true,linkcolor=blue,citecolor=blue,filecolor=magenta,urlcolor=blue}
\usepackage{tikz, tikz-cd} \usetikzlibrary{arrows.meta}

\theoremstyle{plain}
\newtheorem{theorem}{Theorem}[section]
\newtheorem{prop}[theorem]{Proposition}

\newtheorem{lemma}[theorem]{Lemma}

\theoremstyle{definition}
\newtheorem{defi}[theorem]{Definition}
\newtheorem{ex}[theorem]{Example}

\theoremstyle{remark}
\newtheorem{remark}[theorem]{Remark}
\newtheorem{notation}[theorem]{Notation}

\def\CC{\mathbb{C}}

\def\ZZ{\mathbb{Z}}
\def\QQ{\mathbb{Q}}

\DeclareMathOperator{\Ann}{Ann}
\DeclareMathOperator{\Sat}{Sat}
\DeclareMathOperator{\syz}{syz}
\DeclareMathOperator{\lcm}{lcm}

\title[Annihilator and B-S polynomial of a rational function]
{An algorithm for annihilator and Bernstein-Sato polynomial of a rational function}

\author[M.~González-Villa]{Manuel González-Villa}
\address[M.~González-Villa]{Centro de investigación en Matemáticas, Apartado Postal 402, 36000 Guanajuato, Gto., México}
\urladdr{\url{https://www.cimat.mx/~manuel.gonzalez/}}
\email{\href{mailto:manuel.gonzalez@cimat.mx}{manuel.gonzalez@cimat.mx}} 

\author[E.~León-Cardenal]{Edwin León-Cardenal}
\address[E.~León-Cardenal]{Departamento de Matemáticas y Computación, Universidad de La Rioja\\
C. Madre de Dios, 53, 26006, Logro\~no, Spain.}
\urladdr{\url{http://riemann.unizar.es/~eleon}}
\email{\href{mailto:edwin.leon@unirioja.es}{edwin.leon@unirioja.es}}

\author[V.~Levandovskyy]{Viktor Levandovskyy}
\address[V.~Levandovskyy]{EPAM School of Digital Technologies \\
American University Kyiv, Ukraine} 
\urladdr{\url{http://www.math.rwth-aachen.de/homes/Viktor.Levandovskyy/}}
\email{\href{mailto:viktor.levandovskyy@auk.edu.ua}{viktor.levandovskyy@auk.edu.ua}}
\author[J.~Martín-Morales]{Jorge Martín-Morales}
\address[J.~Martín-Morales]{Departamento de Matemáticas, IUMA \\
Universidad de Zaragoza \\ 
C.~Pedro Cerbuna 12 \\ 
50009 Zaragoza, Spain} 
\urladdr{\url{http://riemann.unizar.es/~jorge}}
\email{\href{mailto:jorge@unizar.es}{jorge.martin@unizar.es}}

\subjclass[2020]{14F10, 16S32, 68W30, 32S40}  
\keywords{Rational function, Bernstein-Sato polynomial, annihilator, saturation, Ore localization, noncommutative Gröbner basis, monodromy}
\thanks{\noindent 
JM is partially supported by the European Union NextGenerationEU/PRTR and by MICIU/AEI/10.13039/501100011033 (grant codes: RYC2021-034300-I
and CNS2024-154271).
MG, EL, and JM are partially supported by MICIU/AEI/10.13039/501100011033 (grant code: PID2020-114750GB-C31)
and by Departamento de Ciencia, Universidad y Sociedad del Conocimiento del Gobierno de Arag{\'o}n 
(grant code: E22\_20R: ``{\'A}lgebra y Geometr{\'i}a'').
JM is also supported by Junta de Andaluc{\'\i}a (FQM-333). MG and EL are also partially supported by the European Union NextGenerationEU/PRTR and UNIZAR via María Zambrano's Program and by CONAHCYT project CF-2023-G33.}

\begin{document}

\begin{abstract}
The singularity theory of rational functions, i.e., the quotient of two polynomials, has been investigated
in the past two decades. The Bernstein-Sato polynomial of a rational function has recently been introduced by Takeuchi. However, only trivial examples are known. We provide an algorithm for computing the Bernstein-Sato polynomial in this context. The strategy is to compute the annihilator of the rational function by using the annihilator of the pair consisting of the numerator and denominator of the quotient. In a natural way a non-vanishing condition
on the Bernstein-Sato ideal of the pair appears. This method has been implemented in freely available computer algebra system \textsc{Singular}. It relies on Gr\"obner bases in noncommutative PBW algebras. The algorithm allows us to exhibit some explicit non-trivial examples and to support some existing conjectures.
\end{abstract}

\maketitle
%\tableofcontents

\section{Introduction}

The theory of $b$-functions was developed by M.~Sato during the 60's under the name of algebraic theory of linear differential systems \cite{Sat90}. The main purpose was to study prehomogeneous vector spaces and their zeta functions \cite{SaShi74,SKKO80}. A similar development was taking form a few years later with the study of the properties of the ring of differential operators by I.~N.~Bernstein~\cite{Ber71,Ber72}. The whole mathematical construction is known today as $\mathcal{D}$-module theory and it has connections in many areas of mathematics such as algebraic geometry, singularity theory, topology of varieties, representation theory and differential equations, among others. For a nice introduction to $\mathcal{D}$-module theory we refer to~\cite{Cou95}.

Denote by $D$ the $n$th Weyl algebra over $\CC$, i.e., the ring of differential operators with polynomial coefficients in $n$ variables over $\CC$ or 
\[
\CC [x] \langle\partial\rangle=\CC [x_1,\ldots,x_n] \langle \partial_1,\ldots,\partial_n\rangle,
\] 
where the variables obey the following noncommutative relations $\partial_i x_i = x_i \partial_i + 1$, $i=1,\ldots,n$.
Pick another variable $s$ and consider the algebra $D[s]=D\otimes_\CC\CC[s]$. Given a non-constant polynomial $f \in \CC[x]$ we denote by $\CC[x,s,\frac{1}{f}]$ the localization of $\CC[x,s]$ with respect to the multiplicative set $\{f^k\}_{k\in \ZZ_{\geq 0}}$.
The free $\CC[x,s,\frac{1}{f}]$-module of rank one generated by $f^s$, $M=\CC[x,s,\frac{1}{f}] f^s$, has a natural structure
of $D[s]$-module.
A key result \cite{Ber72} in the theory of $\mathcal{D}$-modules asserts that there exists an element $P(s) \in D[s]$ and a nonzero polynomial $b(s)\in \CC[s]$ verifying the following functional equation 
\begin{equation}\label{Eq:Func_Eq}
	P(s)f^{s+1} = b(s)f^s.
\end{equation}
The set of all polynomials $b(s)$ of $\CC[s]$ satisfying \eqref{Eq:Func_Eq} forms an ideal, and its unique monic generator is called the \textit{Bernstein-Sato polynomial} $b_f(s)$ \textit{of} $f$.
The roots of $b_f(s)$ encode invariants of the singularities of $f$. Kashiwara showed that they are negative rational
numbers~\cite{Kas76}. Malgrange introduced the $V$-filtration and proved that if $s_0$ is a root of $b_f(s)$, then $e^{-2\pi is_0}$ is an eigenvalue of the complex monodromy of $f$ \cite{Mal74,Mal83}.

A more general functional equation that will be relevant in this work involves several functions. 
Given $p$ polynomials $f_1,\ldots,f_p$ in $\CC[x_1,\ldots,x_n]$,
Sabbah \cite{Sab87} showed that there exists a nonzero polynomial $b(s_1,\ldots,s_p) \in \CC[s_1,\ldots,s_p]$ such that
\begin{equation}\label{Eq:Func_Eq_Ideal}
	P(s_1,\ldots,s_p)f_1^{s_1+1}\cdots f_p^{s_p+1} = b(s_1,\ldots,s_p)f_1^{s_1}\cdots f_p^{s_p},
\end{equation}
with $P(s_1,\ldots,s_p)\in D[s_1,\ldots,s_p] = D \otimes_\CC \CC[s_1,\ldots,s_p]$.
The set of polynomials satisfying \eqref{Eq:Func_Eq_Ideal} forms an ideal in $\CC[s_1,\ldots,s_p]$ that we refer to as the \textit{Bernstein-Sato ideal} of $f_1,\ldots,f_p$ and it is denoted by
$B_{f_1,\ldots,f_p}(s_1,\ldots,s_p)$.
For an extensive discussion of the connections between $b$-functions and the corresponding singularities, see for instance~\cite{Po18, AJNB21}.

On the computational side, Oaku~\cite{Oak97} designed the first general algorithm for computing the annihilator of $f^s$
and obtain $b_f(s)$. The use of Gr\"obner deformations and initial forms to study Bernstein-Sato polynomials is explained by Saito,
Sturmfels, and Takayama~\cite{SST00}. Then Noro~\cite{Nor02} provided 
%an efficient 
a modular and linear-algebra-driven algorithm for computing the global $b$-function. A family of algorithms for checking candidate roots of $b_f(s)$ is presented by Levandovskyy
and Martín-Morales~\cite{LM12}. For an overview of other computational contributions, see e.g.~\cite{Wal15}.
Some of these methods have been implemented in different computer algebra systems \cite{OpenXM, LeTs00, LM10, AL12}.

Recently Bernstein-Sato polynomial for rational functions was introduced by Takeuchi \cite{Tak23}.
The Bernstein-Sato polynomial of $\frac{f}{g}$, related to the given nonnegative integer $m$, is the monic polynomial of smallest degree satisfying the 
%following 
relation
\begin{equation*}%\label{Eq:FuncEqTake}
b_{\frac{f}{g},m}^{}(s) \frac{1}{g^m} \left( \frac{f}{g} \right)^s \in
\sum_{k = 1}^\infty D[s] \frac{1}{g^m} \left( \frac{f}{g} \right)^{s+k}
% \sum_{k = 1}^N P_k(s) \frac{1}{g^m} \left( \frac{f}{g} \right)^{s+k} = b_{\frac{f}{g},m}^{}(s) \frac{1}{g^m} \left( \frac{f}{g} \right)^s.
\end{equation*}
in the free $\CC[x,s,\frac{1}{fg}]$-module of rank one $\CC[x,s,\frac{1}{fg}] \left( \frac{f}{g} \right)^s$.
A similar approach was proposed by Àlvarez Montaner, González Villa, León-Cardenal and Núñez-Betancourt in \cite{AGLN21}. The polynomial $b_{\frac{f}{g},m}(s)$ is shown to satisfy
some expected relations with respect to other invariants of the singularity of rational functions.
However, only trivial examples are known.

In this work we provide an algorithm to compute $b_{\frac{f}{g},m}^{(N)}(s)$
defined by the functional equation
\begin{equation}\label{eq:functional-N}
b_{\frac{f}{g},m}^{(N)}(s) \frac{1}{g^m} \left( \frac{f}{g} \right)^s =
\sum_{k = 1}^N P_k(s) \frac{1}{g^m} \left( \frac{f}{g} \right)^{s+k},
\end{equation}
where $P_k(s) \in D[s]$. Note that $b_{\frac{f}{g},m}(s) = b_{\frac{f}{g},m}^{(N)}(s)$ when $N$ is big enough.
In Section~\ref{sec:examples} we provide explicit examples of the Bernstein-Sato polynomial of rational functions.
Following Oaku's original ideas \cite{Oak97} we first compute the annihilator in $D[s]$
of $\frac{1}{g^m} \left( \frac{f}{g} \right)^s$ denoted by
\begin{equation}\label{eq:ann-mero}
\Ann_{D[s]} \frac{1}{g^m} \left( \frac{f}{g} \right)^s.
\end{equation}
Towards the computation of \eqref{eq:ann-mero}, we first compute the annihilator in $D[s_1,s_2]$ of
$f^{s_1} g^{s_2}$, see e.g.~\cite{OaTa99}, that we denote by $I(s_1,s_2)$.
After the substitution $s_1 = s$ and $s_2 = -s-m$ we obtain a new ideal  $I(s,-s-m)$ in $D[s]$ that it is in general
not $\CC[s]$-saturated. Hence $I(s,-s-m)$ does not agree with the one in~\eqref{eq:ann-mero}.
Theorem~\ref{thm:main-theorem} gives a sufficient non-vanishing condition on the Bernstein-Sato ideal of the pair $(f,g)$
to recover the annihilator of $\frac{1}{g^m} \left( \frac{f}{g} \right)^s$ as the saturation of $I(s,-s-m)$.
Even if this condition is not satisfied a careful analysis of the factors of the form $s_1+s_2+\ell$ with $\ell \in \ZZ$
allows us to compute the annihilator~\eqref{eq:ann-mero}, see Section~\ref{sec:L-discussion}.
These algorithms have been implemented in \textsc{Singular} to exhibit explicit examples.

The paper is organized as follows. In Section \ref{sec:settings}, we give the necessary definitions and notations for the concepts
to be dealt with in this paper such as Bernstein-Sato polynomial of rational functions and their related modules. In Section \ref{sec:sat},
we recall a method for computing the saturation of an ideal in a noncommutative context.
The main result of our work, namely Theorem~\ref{thm:main-theorem}, is presented in Section~\ref{sec:annhilator}, and this allowed us
to describe an algorithm to compute $b_{\frac{f}{g}}^{(N)}(s)$ in Section \ref{sec:BS}.
Finally, Section \ref{sec:examples} is devoted to showing some examples.

\section{Settings}\label{sec:settings}

Let $f,g \in \CC[x] = \CC[x_1,\ldots,x_n]$ be two non-constant polynomials and choose another variable~$s$.
We denote by $\CC[x,s,\frac{1}{fg}]$ the localization of $\CC[x,s]$ with respect to the
multiplicatively closed set $\{(fg)^k\}_{k\in \ZZ_{\geq 0}}$. Consider the free $\CC[x,s,\frac{1}{fg}]$-module
of rank one
\[
M = \CC\left[x,s,\frac{1}{fg}\right] \left( \frac{f}{g} \right)^s.
\]
Note that $M$ has a natural structure of $D[s]$-module, where $D[s] = D \otimes_\CC \CC[s]$ and $D$ is the $n$th Weyl algebra,
as follows
\[
\begin{aligned}
x_i \bullet \frac{h(s)}{(fg)^k} \left(\frac{f}{g}\right)^s &= \frac{x_i h(s)}{(fg)^k}  \left(\frac{f}{g}\right)^s, \\
s \bullet \frac{h(s)}{(fg)^k} \left(\frac{f}{g}\right)^s &= \frac{s h(s)}{(fg)^k} \left(\frac{f}{g}\right)^s, \\
\partial_i \bullet \frac{h(s)}{(fg)^k} \left(\frac{f}{g}\right)^s &=
\left(
\frac{\partial}{\partial x_i} \left( \frac{h(s)}{(fg)^k} \right)
+ \frac{s \frac{\partial f}{\partial x_i} h(s)}{f (fg)^k}
- \frac{s \frac{\partial g}{\partial x_i} h(s)}{g(fg)^k}
\right)
\left(\frac{f}{g}\right)^s,
\end{aligned}
\]
for $h(s) \in \CC[x,s]$ and $k \in \ZZ_{\geq 0}$, cf.~\cite[Equation 1.4-1.5]{Tak23} and \cite[Definition 3.6]{AGLN21}. In other words, polynomials in $x,s$ act by the regular multiplication, while $\partial_i$ acts as the partial differentiation.

\begin{theorem}\label{thm:Takeuchi} {\rm (\cite[Theorem 1.1]{Tak23})}
Given a nonnegative integer $m$ there exists a nonzero polynomial $b_{\frac{f}{g},m}^{}(s) \in \CC[s]$ such that 
\begin{equation*}%\label{Eq:FuncEqTake}
b_{\frac{f}{g},m}^{}(s) \frac{1}{g^m} \left( \frac{f}{g} \right)^s \in
\sum_{k = 1}^\infty D[s] \frac{1}{g^m} \left( \frac{f}{g} \right)^{s+k}.
% \sum_{k = 1}^N P_k(s) \frac{1}{g^m} \left( \frac{f}{g} \right)^{s+k} = b_{\frac{f}{g},m}^{}(s) \frac{1}{g^m} \left( \frac{f}{g} \right)^s.
\end{equation*}
\end{theorem}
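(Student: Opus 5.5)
The plan is to derive the statement from Sabbah's theorem on the Bernstein-Sato ideal of the pair $(f,g)$. Use a functional equation only in the direction $s_1\mapsto s_1+1$, and specialize to $s_1=s$, $s_2=-s-m$.

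Consider the $D[s_1,s_2]$-module $\CC[x,s_1,s_2,\frac{1}{fg}]f^{s_1}g^{s_2}$. There is a $D[s]$-linear specialization map onto $M$ sending $h(x,s_1,s_2)f^{s_1}g^{s_2}$ to $h(x,s,-s-m)\frac{1}{g^m}(\frac{f}{g})^s$. Compatibility with $\partial_i$ holds because both sides obey the same Leibniz rule with logarithmic derivatives $s_1 f'/f+s_2 g'/g$, which become $s f'/f-(s+m)g'/g$. Under this map $f^{s_1+k}g^{s_2}$ goes to $f^{s+k}g^{-s-m}=g^{k}\cdot\frac{1}{g^m}(\frac{f}{g})^{s+k}$.

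The issue is that $f^{s_1+1}g^{s_2}$ corresponds to $g\cdot\frac{1}{g^m}(\frac{f}{g})^{s+1}$, so we need extra $f$'s without extra $g$'s. The natural choice is the shift $(s_1,s_2)\mapsto(s_1+1,s_2-1)$. I would apply the multivariable Bernstein-Sato theory (Sabbah's theorem, or its generalization to arbitrary shift vectors $\mathbf a\in\ZZ^p$, also due to Sabbah/Bahloul/Gyoja) to get a nonzero $b(s_1,s_2)$ and $P\in D[s_1,s_2]$ with
\[
P\, f^{s_1+1}g^{s_2-1}=b(s_1,s_2)\,f^{s_1}g^{s_2}.
\]
Specializing gives
\[
P(s,-s-m)\,\frac{1}{g^m}\Big(\frac{f}{g}\Big)^{s+1}=b(s,-s-m)\,\frac{1}{g^m}\Big(\frac{f}{g}\Big)^{s},
\]
since $f^{s+1}g^{-s-m-1}=\frac{1}{g^m}(\frac fg)^{s+1}$. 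This handles $k=1$ directly.

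The main obstacle is that $b(s,-s-m)$ may vanish identically, namely if $s_1+s_2+m$ divides every element of the Bernstein-Sato ideal for this shift. This is exactly the phenomenon the introduction flags. To overcome it I would use the freedom of larger $k$, which is why the statement allows a sum over $k\ge1$. Iterate the equation $k$ times to get $b_k(s_1,s_2)f^{s_1}g^{s_2}\in D[s_1,s_2]f^{s_1+k}g^{s_2-k}$. Alternatively, argue more intrinsically: the $D[s]$-module $M/\sum_{k\ge1}D[s]\frac{1}{g^m}(\frac fg)^{s+k}$ is generated by the class of $\frac{1}{g^m}(\frac fg)^s$, and one shows it is $\CC[s]$-torsion. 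For that I would prove holonomicity over $\CC(s)$: $M\otimes\CC(s)$ is a holonomic $D_{\CC(s)}$-module, being a localization of a rank-one twisted module. Its submodules then stabilize, so the descending chain $D_{\CC(s)}(f/g)^{s+k}/g^m$ becomes constant, and generically in $s$ the sections $(f/g)^{s+k}$ generate. Concretely, one needs $D_{\CC(s)}\frac1{g^m}(\frac fg)^{s+1}=M\otimes\CC(s)$. I would check this by a Bernstein-type argument: the module generated by $(f/g)^{s+1}$ contains $(fg)^{-N}(f/g)^{s}$-type elements by Kashiwara-Bernstein for $fg$ over the field $\CC(s)$, using that $s$ is generic so no integer roots obstruct. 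Clearing denominators in $s$ then gives a nonzero $b(s)\in\CC[s]$. Justifying this genericity step carefully is where I expect most of the work to lie.
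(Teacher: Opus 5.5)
Your specialization map and the $(1,-1)$-shifted Sabbah equation are correct, and they settle the case where some $b$ in that shifted ideal does not vanish on $s_1+s_2=-m$. The remaining case is a genuine gap: neither of your two ways of handling it works.

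Iterating the shifted equation cannot help. The shift $(s_1,s_2)\mapsto(s_1+1,s_2-1)$ preserves $s_1+s_2$, so if $s_1+s_2+m$ divides $b$, it divides every factor of $\prod_{j=0}^{k-1}b(s_1+j,s_2-j)$, and the specialization is again $0$.

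In the second route, the claim you reduce everything to, $D_{\CC(s)}\frac{1}{g^m}(\frac{f}{g})^{s+1}=M\otimes\CC(s)$, is false in general. After clearing denominators it would give a nonzero $b(s)$ with the single term $k=1$. But for $f=x^2+y^2$, $g=xy$, $m=1$ the paper finds $b^{(1)}_{\frac{f}{g},1}(s)=0$, which is exactly why the statement needs $\sum_{k\ge1}$. (One can also check this by hand. Grade by $\deg x_i=1$, $\deg\partial_i=-1$; then $u_0=\frac1g(\frac fg)^s$ and $u_1=\frac1g(\frac fg)^{s+1}$ are homogeneous of degree $-2$, i.e.\ the twisted $1$-forms $(t+t^{-1})^{s}\frac{dt}{t}$ and $(t+t^{-1})^{s+1}\frac{dt}{t}$ on $\mathbb{P}^1\setminus\{0,\pm i,\infty\}$. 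Degree-$0$ operators act by Lie derivatives, so the degree $-2$ part of $D_{\CC(s)}u_1$ lies in $\CC(s)u_1$ plus twisted-exact forms. The periods over $(0,\infty)$ are the Beta values $2^sB(-\frac s2,\frac12)$ and $2^{s+1}B(-\frac{s+1}2,\frac12)$, whose ratio has infinitely many poles in $s$, so it is not in $\CC(s)$.)

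Your genericity heuristic fails because the specialization keeps $s_1+s_2=-m$ integral. Generic $s$ therefore does not give a generic exponent pair, and Bernstein's theorem for $fg$ says nothing about $f^sg^{-s-m}$. Also, the cyclic modules $D_{\CC(s)}\frac{1}{g^m}(\frac fg)^{s+k}$ do not form a descending chain, since $u_{k+1}=\frac fg\,u_k$ and $\frac fg\notin D$.

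The missing idea is Bernstein's shift argument applied to \emph{sums}, not to cyclic modules.
Put $K=\CC(s)$, $u_k=\frac{1}{g^m}(\frac fg)^{s+k}$, and $N_j=\sum_{k\ge j}D_Ku_k\subseteq M_K=K[x,\frac{1}{fg}](\frac fg)^s$. These are nested, $N_0\supseteq N_1\supseteq\cdots$, and the cyclic module you must show is torsion is $N_0/N_1$, not $M/N_1$.

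Then argue as follows:
\begin{enumerate}
\item $M_K$ is holonomic over $D_K$ by Bernstein's estimate with $\Gamma_j=\{h\,(fg)^{-j}(\frac fg)^s : \deg h\le j(\deg(fg)+1)\}$. Hence it has finite length, and $N_{j_0}=N_{j_0+1}$ for some $j_0$.
\item The map $\tau\colon h(x,s)(\frac fg)^s\mapsto h(x,s+1)\frac fg(\frac fg)^s$ is a bijection of $M_K$ with $\tau(P(s)v)=P(s+1)\tau(v)$ and $\tau(u_k)=u_{k+1}$. Hence $\tau(N_j)=N_{j+1}$, and $N_0=\tau^{-j_0}(N_{j_0})=\tau^{-j_0}(N_{j_0+1})=N_1$.
\item So $u_0\in\sum_{k=1}^{N}D_Ku_k$ for some $N$. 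Clearing denominators in $s$, and using $M\hookrightarrow M_K$, gives the required nonzero $b(s)$.
\end{enumerate}
No Bernstein--Sato ideal of the pair and no genericity statement is needed. (The paper itself gives no proof of this theorem; it quotes it from Takeuchi.)
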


This result is a global version of the original Theorem by Takeuchi. In particular, he defines a local version of the
Bernstein-Sato polynomial, that is denoted by $b_{\frac{f}{g},m, x_0}^{}(s)$ for $x_0 \in \CC^n$. It is worth emphasizing the importance of $m$ in relation to the local Milnor monodromy of the rational function \cite{MR1647824,MR1734347}.
More precisely, consider the set $E_{\frac{f}{g}, x_0}$ of eigenvalues of the local Milnor fibration at $x_0$ and at points close to $x_0$. 
Moreover, assume that 
%$m$ is greater than or equal to twice the ambient dimension, i.e., if 
$m \geq 2n$ for the ambient dimension $n$, then in \cite[Theorem 1.4]{Tak23} Takeuchi proves, that 
\begin{equation}\label{eq:thm-eigen}
\left\{ \exp(2 \pi i \alpha) \mid b_{\frac{f}{g},m, x_0}(\alpha)=0 \right\} = E_{\frac{f}{g},x_0}.
\end{equation}
If $m < 2n$, then only the inclusion $\{\exp(2 \pi i \alpha) \mid b_{\frac{f}{g},m, x_0}(\alpha)=0\} \subseteq E_{\frac{f}{g}, x_0}$ holds in general.

We end this preliminary section with the definition of the Bernstein-Sato ideal of the pair $f,g$
that will be relevant for our purpose, see Theorem~\ref{thm:main-theorem}.
Its existence was first proven by Sabbah~\cite{Sab87}.

\begin{defi}\label{def:BS-ideal}
Given two non-constant polynomials $f,g \in \CC[x_1,\ldots,x_n]$, there exists a nonzero polynomial $b(s_1,s_2) \in \CC[s_1,s_2]$
such that
\[
b(s_1,s_2) f^{s_1} g^{s_2} = P(s_1,s_2) f^{s_1+1} g^{s_2+1},
\]
where $P(s_1,s_2) \in D[s_1,s_2] = D \otimes_\CC \CC[s_1,s_2]$.
The set of polynomials $b(s_1,s_2)$ satisfying this functional equation forms an ideal in $\CC[s_1,s_2]$, which is called the \emph{Bernstein-Sato ideal} of $f,g$, and is denoted by $B_{f,g}$.
\end{defi}

\section{Central saturation}\label{sec:sat}

The goal of this section is to recall an algorithm to compute the saturation with respect to $\CC[s]$ of a left ideal in~$D[s]$ starting from a system of generators of the ideal. The formal definition is given below.
We refer to e.g.~\cite[\S 8.7]{BW93} for a commutative version of this
method and to~\cite{HL21} for a more general approach including the noncommutative case.
%Viktor
Note, that the center of $D[s]$ is precisely the ring $\CC[s]$, and this holds for an arbitrary number of variables $s_1, \ldots, s_{\ell}$. Hence, the only two-sided ideals in $D[s]$ are those, extended from $\CC[s]$. Moreover, the important technique of \emph{saturation}, with which we deal below, can be treated by the much simpler \emph{central saturation}. %cf V+H

\begin{defi}
Given a left ideal $I \subset D[s]$, we define the $\CC[s]$-\emph{saturation} of $I$  as the left ideal %defined by
\[
\Sat_{\CC[s]}(I) := \{ P(s) \in D[s] \mid \exists q(s) \in \CC[s] \setminus \{0\}, \ q(s) P(s) \in I \}.
\]
This ideal contains $I$, i.e. $I \subseteq \Sat_{\CC[s]}(I)$, and we say that $I$ is $\CC[s]$-\emph{saturated} if the equality $I=\Sat_{\CC[s]}(I)$ takes place. Analogously, we can define this notion for a left ideal $I \subset D[s_1, \ldots, s_{\ell}]$ for $\ell \geq 2$.
\end{defi}

\begin{remark}
In a more general setting, saturation appears in the context of \emph{Ore localizations} of modules over a Noetherian domain. It is taken with respect to a multiplicatively closed Ore set (also called a denominator set). Strictly speaking, we should say $(\CC[s]\setminus\{0\})$-\emph{saturated}, but since we only deal with domains, we use the simpler term
$\CC[s]$-\emph{saturated}.
%Let $S$ denote, depending on the context, the set $\{s_1,\ldots,s_m\}$, where the cases $m=1,2$ deserve special attention.
The localization of $D[s]$ with respect to $\CC[s]\setminus\{0\}$ is denoted by $D(s)$ and can be seen being naturally isomorphic to $D \otimes_{\CC} \CC(s)$ via the extension of scalars. Thus the kernel of the homomorphism $D[s]/D[s]I \to D(s)/D(s)I$ of $D[s]$-modules is precisely $\Sat_{\CC[s]}(I)$. %cf V+H arxiv?
\end{remark}

The following result, namely Proposition~\ref{prop:computation-saturation}, collects some properties of the saturation
that will allow us to describe Algorithm~\ref{alg:sat} for computing it. Before that let us fix some notation.
We focus on the case of the single variable ${\ell}=1$. Let \[ \iota: D[s] \hookrightarrow D(s) \]
be the natural inclusion of $D[s]$-modules.
By abuse of notation the image of $P(s) \in D[s]$ is also denoted by $P(s) \in D(s)$.
If $I$ is a left ideal in $D[s]$, then $I^e := D(s) I$
denotes the \emph{extension} of~$I$. If $J$ is a left ideal in $D(s)$, then $J^c = J \cap D[s] = \iota^{-1}(J)$ denotes the \emph{contraction} of $J$.
In particular, $I^{ec}:=(I^{e})^c$ denotes the contraction of the extension of $I$. We can assume that a generating set of a left ideal in $D(s)$ we work with consists of elements from $D[s]$: this can always be achieved by clearing the denominators, which are polynomials in $s$.

Let us fix a monomial well-ordering $<_D$ on $D$. There are two natural orderings associated with $<_D$.
The first one is the ordering $<_{D(s)}$ on $D(s)$ that is defined as $<_D$, since $s$ belongs to the ground field of $D(s)$.
The second one is the ordering $<_{D[s]}$ on $D[s]$ defined as follows: 
%Extending scalars, this monomial ordering induces a monomial ordering in $D(s)$ that we denote by $<_{D(s)}$. In this context there is a monomial ordering in $D[s]$, denoted by $<_{D[s]}$, that plays an important role and that it is defined
\begin{equation}\label{monomial-ordering-Ds}
s^{i_1} x^{\alpha_1} \partial^{\beta_1} <_{D[s]} s^{i_2} x^{\alpha_2} \partial^{\beta_2}
\quad \Longleftrightarrow \quad
\begin{cases}
x^{\alpha_1} \partial^{\beta_1} <_D x^{\alpha_2} \partial^{\beta_2}, \\
\quad \text{or} \\[3pt]
x^{\alpha_1} \partial^{\beta_1} = x^{\alpha_2} \partial^{\beta_2} \ \text{and} \ i_1 < i_2.
\end{cases}
\end{equation}
%Using matrix notation, the relation between these monomial ordering is expressed as
The matrix associated with the latter is
\[
\begin{array}{ccc}
&& s \ \ \ x \ \partial \ \\
<_{D[s]} & = & \left(\begin{array}{c|c} 0 & <_D \\ \hline 1 & 0 \end{array}\right).
\end{array}
\]
This monomial ordering has the following properties: (i) it coincides with $<_D$, being restricted on $D$, and (ii) it behaves the same way as an ordering on $D(s)$ would.

Finally for $g \in D(s)$ let us denote by $LT_{<_{D(s)}}(g)$ the leading term
%of $g$ with respect to $<_{D(s)}$ 
and by $LC_{<_{D(s)}}(g)$ the leading coefficient of $g$ with respect to $<_{D(s)}$. Note that $LC_{<_{D(s)}}(g) \in \CC[s]$. 
Analogously, for $g \in D[s]$ one defines $LC_{<_{D[s]}}(g) \in \CC$. 
% Viktor, last line fixed: LC instead of LT

\begin{prop}\label{prop:computation-saturation}
Let $I$ be a left ideal in $D[s]$ and $J$ a left ideal in $D(s)$.
Choose any monomial ordering $<_D$ in $D$ and denote by $<_{D[s]}$
(resp.~$<_{D(s)}$) the monomial ordering induced by $<_D$ in $D[s]$
(resp.~$D(s)$) as above, see~\eqref{monomial-ordering-Ds}. The following properties hold:
\begin{enumerate}
\item $I^{ec} = \Sat_{\CC[s]}(I)$.
\item If $G_J$ is a Gr\"obner basis of $J$ with respect to $<_{D(s)}$ with $G_J \subset D[s]$
and $q(s) \in \CC[s]$ is the least common multiple of all $LC_{<_{D(s)}}(g)$ for $g \in G_J$, 
then $J^c = D[s]\langle G_J \rangle : q(s)^\infty$.
\item If $G_I$ a Gr\"obner basis of $I$ with respect to $<_{D[s]}$,
then $G_I$ is a Gr\"obner basis of $I^e$ with respect to $<_{D(s)}$.
\end{enumerate}
\end{prop}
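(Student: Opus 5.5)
We prove the three items separately; each reduces to an elementary property of $D(s)$ as a central localization together with the standard Buchberger/normal-form machinery for PBW algebras.

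For (1), we first record that every element of $D(s)$ has the form $q(s)^{-1}P(s)$ with $P(s)\in D[s]$ and $q(s)\in\CC[s]\setminus\{0\}$. This holds because $\CC[s]$ is central, so common denominators can be pulled out freely. Consequently every element of $I^e = D(s)I$ can be written as $q(s)^{-1}Q(s)$ with $Q(s)\in I$: write $\sum_j a_j(s)^{-1}R_j(s)P_j(s)$, put everything over a common denominator, and use that $I$ is a left ideal.

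Now let $P\in I^{ec}$. Then $P = q^{-1}Q$ with $Q\in I$, so $qP = Q\in I$, which gives $P\in\Sat_{\CC[s]}(I)$. Conversely, if $qP\in I$ for some nonzero $q$, then $P = q^{-1}(qP)\in I^e\cap D[s]$. This proves (1).

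For (3), we compare the monomial structures of the two rings. The $D[s]$-monomials are $s^i x^\alpha\partial^\beta$, and by \eqref{monomial-ordering-Ds} the leading monomial of $h\in D[s]$ for $<_{D[s]}$ is $s^{i}\,LM_{<_{D(s)}}(h)$. Here $i$ is the top power of $s$ in the coefficient $LC_{<_{D(s)}}(h)$. In particular the $D$-part of $LM_{<_{D[s]}}(h)$ equals $LM_{<_{D(s)}}(h)$.

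Take $h\in I^e$ nonzero. By the description above, $h=q^{-1}Q$ with $Q\in I$, and $LM_{<_{D(s)}}(h)=LM_{<_{D(s)}}(Q)$. Since $G_I$ is a Gröbner basis of $I$, some $g\in G_I$ has $LM_{<_{D[s]}}(g)$ dividing $LM_{<_{D[s]}}(Q)$. Forgetting the $s$-powers, $LM_{<_{D(s)}}(g)$ divides $LM_{<_{D(s)}}(Q)$. Because every nonzero element of $\CC(s)$ is a unit, this is exactly the Gröbner basis condition for $G_I\subset I^e$ over $D(s)$. We also note that $G_I$ generates $I^e$, since it generates $I$.

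For (2), set $K = D[s]\langle G_J\rangle : q^\infty$.

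First, $K\subseteq J^c$. If $q^kP\in D[s]\langle G_J\rangle\subseteq J$, then $P = q^{-k}(q^kP)\in J\cap D[s]$.

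The reverse inclusion $J^c\subseteq K$ is the main step. Let $P\in J^c$ and run the division (normal form) algorithm of $P$ by $G_J$ in $D(s)$. Each reduction step cancels the leading term $c(s)\,x^\alpha\partial^\beta$ of the current remainder using some $g\in G_J$. To stay inside $D[s]$, we perform this step fraction-free: multiply the current remainder by $LC(g)$, which is central, and subtract $c(s)\,x^{\alpha'}\partial^{\beta'}g$. We must be careful that left-multiplying $g$ by a monomial in $D$ does not alter the leading coefficient. This holds in the Weyl algebra: the leading term of $x^{\alpha'}\partial^{\beta'}g$ is $LC(g)\,x^{\alpha'+\cdot}\partial^{\beta'+\cdot}$, since the commutation corrections only produce lower terms.

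Termination follows from the well-ordering of $<_D$. Since $P\in J$ and $G_J$ is a Gröbner basis, the final remainder is $0$. Hence $\prod LC(g_{j})\cdot P\in D[s]\langle G_J\rangle$, where the product runs over the reductions used. Each factor divides $q$, so the whole product divides $q^N$ for $N$ the number of steps. Therefore $q^NP\in D[s]\langle G_J\rangle$, i.e. $P\in K$.
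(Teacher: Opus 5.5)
Your proof is correct and follows the route the paper indicates. For (3) you use exactly the observation in the paper's remark: by the block form of $<_{D[s]}$, the $D$-part of $LT_{<_{D[s]}}(g)$ is $LT_{<_{D(s)}}(g)$. For (1) and (2), which the paper leaves to the standard commutative argument and the cited literature, you supply that standard argument: common denominators for (1), and fraction-free top-reduction by $G_J$, using that $\CC[s]$ is central, for (2). The one detail you leave implicit is immediate: every intermediate remainder stays in $J$, so its leading term is always reducible.
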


\begin{remark}
The key point in the proof of (3) in the previous result relies on the fact that if $LT_{<_{D[s]}}(g) = s^{i} x^{\alpha} \partial^{\beta}$,
then $LT_{<_{D(s)}}(g) = x^{\alpha} \partial^{\beta}$. This is a consequence of the following property
\[
s^{i_1} x^{\alpha_1} \partial^{\beta_1} \leq_{D[s]} s^{i_2} x^{\alpha_2} \partial^{\beta_2}
\quad \Longrightarrow \quad 
x^{\alpha_1} \partial^{\beta_1} \leq_{D} x^{\alpha_2} \partial^{\beta_2}
\ (\text{or equivalently} \ x^{\alpha_1} \partial^{\beta_1} \leq_{D(s)} x^{\alpha_2} \partial^{\beta_2})
\]
that holds by definition of $<_{D[s]}$.
\end{remark}

The correctness of the following algorithm is an immediate consequence of Proposition~\ref{prop:computation-saturation}.

\begin{algorithm}\label{alg:sat}
	\SetKwInOut{Input}{Input}
	\SetKwInOut{Output}{Output}
	\caption{\textsc{Central saturation} \big(computes the $\CC[s]$-saturation of $I \subset D[s]$\big)}
	\Input{$I$ a left ideal in $D[s]$.}
	\Output{$\{P_1(s),\ldots,P_r(s)\} \subset D[s]$ a system of generators of $\Sat_{\CC[s]}(I)$.}
	%\KwData{$n \geq 0$}
	%\KwResult{$y = x^n$}
	%$y \gets 1$\;
	\Begin{
		Choose $<_D$ a monomial ordering in $D$\;
		$<_{D(s)}:=$ Monomial ordering in $D(s)$ induced by $<_D$ in $D(s)$\;
		$<_{D[s]}\,:=$ Monomial ordering in $D[s]$ defined in~\eqref{monomial-ordering-Ds}\;
		$G:=$ Gröbner basis of $I$ w.r.t.~$<_{D[s]}$ \tcc*[r]{by (3) a GB of $I^e$ w.r.t.~$<_{D(s)}$}
		$q(s):= \lcm \{ LC_{<_{D(s)}}(g) \mid g \in G\}$\;
		$J = I:q(s)^\infty$ \tcc*[r]{by (2) $J = D[s]\langle G \rangle:q(s)^\infty = I^{ec}	$}
		\Return $J$ \tcc*[r]{by (1) it equals $\Sat_{\CC[s]}(I)$}
	} 
\end{algorithm}

We will show an example of this algorithm in the next section, see Example~\ref{ex:Reiffen-xy-m3}.

\section{Annihilator of a rational function}\label{sec:annhilator}

Let $f,g \in \CC[x_1,\ldots,x_n]$ be two polynomials and $m \in \ZZ$, $m \geq 0$.
In this section we discuss the computation of the annihilator in $D[s]$
of $\frac{1}{g^m} \left( \frac{f}{g} \right)^s$ that we denote by
\begin{equation}\label{eq:ann-mero2}
I_m(s) := \Ann_{D[s]} \frac{1}{g^m} \left( \frac{f}{g} \right)^s.
\end{equation}
Towards the computation of \eqref{eq:ann-mero2}, we first compute the annihilator in $D[s_1,s_2]$ of
$f^{s_1} g^{s_2}$, see e.g.~\cite{OaTa99}, that we denote by
\[
I(s_1,s_2) = \Ann_{D[s_1,s_2]} f^{s_1} g^{s_2} \subset D[s_1,s_2].
\]
Then we notice that the ideal $I(s,-s-m)$ is in general not $\CC[s]$-saturated
and hence $I_m(s)$ and $I(s,-s-m)$ do not necessarily agree, see Proposition~\ref{prop:saturation}.
A sufficient condition for the saturation of $I(s,-s-m)$ to be equal to $I_m(s)$ is given
in Theorem~\ref{thm:main-theorem}. Finally, even if this sufficient condition is not satisfied,
we are still able to compute $I_m(s)$, see Section~\ref{sec:L-discussion}.

Let us start with the following result that we believe is well known for experts but
we include its proof for the sake of completeness.

\begin{prop}\label{prop:saturation}
The ideal $I_m(s)$ (resp.~$I(s_1,s_2)$) is $\CC[s]$-saturated (resp.~$\CC[s_1,s_2]$-saturated).
\end{prop}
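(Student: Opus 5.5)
The plan is to reduce saturatedness of an annihilator to the absence of $\CC[s]$-torsion in the module the element generates. If $P(s)\in D[s]$ and $q(s)\in\CC[s]\setminus\{0\}$ satisfy $q(s)P(s)\in I_m(s)$, then $q(s)\bigl(P(s)\bullet u\bigr)=0$ in $M$, where $u=\frac{1}{g^m}\left(\frac{f}{g}\right)^s$. Here we use that $q(s)$ is central in $D[s]$, and that $s$ acts on $M$ by multiplication by $s$. The element $w:=P(s)\bullet u$ lies in $M$, and $M$ is a free module of rank one over $\CC[x,s,\frac{1}{fg}]$.

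Write $w=h\cdot\left(\frac{f}{g}\right)^s$ with $h\in\CC[x,s,\frac{1}{fg}]$. Then $q(s)w=q(s)h\left(\frac{f}{g}\right)^s$, and by freeness this vanishes if and only if $q(s)h=0$ in $\CC[x,s,\frac{1}{fg}]$. That ring is a localization of the domain $\CC[x,s]$, hence itself a domain, and $q(s)\neq 0$ in it. Therefore $h=0$, so $P(s)\bullet u=0$, i.e. $P(s)\in I_m(s)$. This gives $\Sat_{\CC[s]}(I_m(s))\subseteq I_m(s)$. The reverse inclusion holds by definition, since we may take $q=1$.

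For $I(s_1,s_2)$ the argument is the same, now in the module $\CC[x,s_1,s_2,\frac{1}{fg}]\,f^{s_1}g^{s_2}$. This module is free of rank one over the domain $\CC[x,s_1,s_2,\frac{1}{fg}]$, with $s_1,s_2$ acting by multiplication and the $\partial_i$ acting by the analogous Leibniz-type formulas. A nonzero $q(s_1,s_2)$ is central in $D[s_1,s_2]$ and is a nonzerodivisor on this module, so the same two-line argument applies.

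The only point needing care, and the place I would check first, is that $s$ (resp. $s_1,s_2$) really acts as multiplication by a scalar commuting with the $D$-action. This is what lets us move $q$ past $P$ and conclude that $q(s)P(s)\bullet u=q(s)\bullet(P(s)\bullet u)$ is just multiplication. That centrality is immediate from the defining formulas of the action in Section~\ref{sec:settings}: $s$ commutes with $x_i$ and $\partial_i$ there. So I expect no real obstacle beyond recording the torsion-freeness of $M$.
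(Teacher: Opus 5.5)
Your proposal is correct and follows the paper's proof: write $P(s)\bullet u$ as $h\,(f/g)^s$ with $h\in\CC[x,s,\frac{1}{fg}]$, and use that $q(s)h=0$ forces $h=0$ in this domain; the two-variable case is handled the same way. The centrality of $q(s)$ is harmless but not actually needed, since $(q(s)P(s))\bullet u=q(s)\bullet(P(s)\bullet u)$ already follows from the module axioms.
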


\begin{proof}
We only prove the one-variable case, the other case is analogous. Consider $P(s) \in D[s]$ and $q(s) \in \CC[s]$ with
$q(s)\neq 0$ such that $q(s) P(s) \frac{1}{g^m} \left( \frac{f}{g} \right)^s = 0$. Recall that
\[
P(s) \frac{1}{g^m} \left( \frac{f}{g} \right)^s \in \CC\Big[x_1,\ldots,x_n,s,\frac{1}{fg}\Big] \left( \frac{f}{g} \right)^s.
\]
Thus $P(s) \frac{1}{g^m} \left( \frac{f}{g} \right)^s$ can be written as
$\frac{h(s)}{(fg)^k} \left( \frac{f}{g} \right)^s$ for some
polynomial $h(s) \in \CC[x_1,\ldots,x_n,s]$ and $k \geq 0$. Therefore
\[
q(s) P(s) \frac{1}{g^m} \left( \frac{f}{g} \right)^s
= \frac{q(s) h(s)}{(fg)^k} \left( \frac{f}{g} \right)^s
= 0
\]
and hence $\frac{q(s) h(s)}{(fg)^k} = 0$ or equivalently $\frac{h(s)}{(fg)^k} = 0$.
Then $P(s) \frac{1}{g^m} \left( \frac{f}{g} \right)^s = 0$ and the claim follows.
\end{proof}

\begin{remark}
Indeed, one can show more, while involving non-central localizations (though we do not go into details here). In the algebra $D[s]$,
the annihilator ideal of any $f^s$ and of $\left(\frac{f}{g}\right)^s$ is even $\CC[x_1,\ldots,x_n,s]$-saturated. This follows along the lines of the proof above, since no element of $\CC[x_1,\ldots,x_n,s]\setminus\{0\}$ annihilates any object which we are investigating.
\end{remark}

Recall that in general we can not expect the equality $I_m(s) = I(s,-s-m)$ to be true, see Example~\ref{ex:Reiffen-xy-m3}.
However, the relation given in Theorem~\ref{thm:main-theorem} inspired by the proof of \cite[Theorem 5.3.13]{SST00} holds. Note, that this proof actually holds for a more general statement than the mentioned Theorem 5.3.13, see \cite{Oaku12}.
% VL: I suggest a reformulation since this can scare a reader :)
%As a word of caution note that there is a typo in the statement of Theorem 5.3.13 in \cite{SST00} as acknowledged
%by one of the authors\footnote{\href{http://www.math.kobe-u.ac.jp/HOME/taka/SST/}{\texttt{http://www.math.kobe-u.ac.jp/HOME/taka/SST/}}}.
First we need to introduce some notation.

\begin{defi}\label{defi:C-condition}
Let $f,g \in \CC[x_1,\ldots,x_n]$ be two polynomials and $m \in \ZZ_{\geq 0}$.
Consider $B_{f,g} \subset \CC[s_1,s_2]$ the Bernstein-Sato ideal of $f$ and $g$, see Definition~\ref{def:BS-ideal}.
We say that $b(s_1,s_2) \in B_{f,g}$ satisfies the $\mathcal{C}_m$-\emph{condition} if
$b(s-i, -s-m-i) \neq 0$, for all $i \in \ZZ$, $i \geq 1$. Moreover we say that the ideal $B_{f,g}$ satisfies
the $\mathcal{C}_m$-\emph{condition} if there exists $b(s_1,s_2) \in B_{f,g}$ such that $b(s_1,s_2)$ satisfies
the $\mathcal{C}_m$-condition.
\end{defi}

\begin{remark}\label{rem:C-condition}
%Using Euclid's division algorithm in $R[s_2]$ with $R = \CC[s_1]$, 
%{\bf Viktor: $R[s_2]$ is NOT a Euclidean domain since $R=\CC[s_1]$ is not a field, what kind of division do you expect? You already need Gr\"obner bases for the appropriate division algorithm!}
Note that the following conditions are equivalent:
\begin{enumerate}
\item $b(s_1,s_2)$ does not satisfy the $\mathcal{C}_m$-condition,
\item there exists $i \geq 1$ such that $b(s-i,-s-m-i) = 0$,
\item there exists $i \geq 1$ such that $s_1 + s_2 + m + 2i$ divides $b(s_1,s_2)$ in $\CC[s_1,s_2]$.
\end{enumerate}
\end{remark}

\begin{theorem}\label{thm:main-theorem}
Assume that the Bernstein-Sato ideal $B_{f,g}$ satisfies the $\mathcal{C}_m$-condition.
Then
\[
I_m(s) = \Sat_{\CC[s]}(I(s,-s-m)).
\]
\end{theorem}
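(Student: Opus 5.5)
The plan is to prove the two inclusions separately. The inclusion $\Sat_{\CC[s]}(I(s,-s-m)) \subseteq I_m(s)$ is formal. The reverse inclusion uses the functional equation for the pair $(f,g)$ to produce a nonzero saturating polynomial, in the spirit of the proof of \cite[Theorem 5.3.13]{SST00}.

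\emph{Inclusion $\supseteq$.} Let $L=\CC[x,s_1,s_2,\frac{1}{fg}]f^{s_1}g^{s_2}$. Consider the specialization map $\phi: L\to M$ given by
\[
\frac{h(x,s_1,s_2)}{(fg)^k}f^{s_1}g^{s_2}\mapsto \frac{h(x,s,-s-m)}{(fg)^k}\,\frac{1}{g^m}\Big(\frac fg\Big)^s .
\]
I would check directly from the formulas for the action of $\partial_i$ that $\phi(Q(s_1,s_2)u)=Q(s,-s-m)\phi(u)$ for all $Q\in D[s_1,s_2]$ and $u\in L$. Both sides involve the same logarithmic derivatives $s_1 f_{x_i}/f$ and $s_2 g_{x_i}/g$, specialized. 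Hence if $Q\in I(s_1,s_2)$, then $Q(s,-s-m)$ annihilates $\phi(f^{s_1}g^{s_2})=\frac{1}{g^m}(\frac fg)^s$. This gives $I(s,-s-m)\subseteq I_m(s)$. Since $I_m(s)$ is $\CC[s]$-saturated by Proposition~\ref{prop:saturation}, we get $\Sat_{\CC[s]}(I(s,-s-m))\subseteq I_m(s)$.

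\emph{Inclusion $\subseteq$.} Let $P(s)\in I_m(s)$ and view $P(s_1)\in D[s_1]\subset D[s_1,s_2]$. Put $u=P(s_1)f^{s_1}g^{s_2}\in L$ and write $u=\frac{h}{(fg)^k}f^{s_1}g^{s_2}$. Since $P(s)$ annihilates $\frac{1}{g^m}(\frac fg)^s$, we have $\phi(u)=0$, so $h(x,s_1,-s_1-m)=0$. Therefore $s_1+s_2+m$ divides $h$, say $h=(s_1+s_2+m)h'$, and
\[
u=(s_1+s_2+m)w, \qquad w=h' f^{s_1-k}g^{s_2-k}.
\]
Clearly $w\in D[s_1,s_2]f^{s_1-k}g^{s_2-k}$. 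Now pick $b\in B_{f,g}$ satisfying the $\mathcal{C}_m$-condition and shift the functional equation $k$ times. Setting $B_k(s_1,s_2)=\prod_{i=1}^k b(s_1-i,s_2-i)$, this gives $B_k f^{s_1-k}g^{s_2-k}\in D[s_1,s_2]f^{s_1}g^{s_2}$. Since $B_k$ is central, $B_k w=Q f^{s_1}g^{s_2}$ for some $Q\in D[s_1,s_2]$. Consequently
\[
B_k(s_1,s_2)P(s_1)-(s_1+s_2+m)Q\in I(s_1,s_2).
\]
Substituting $s_1=s$, $s_2=-s-m$ kills the second term and yields $q(s)P(s)\in I(s,-s-m)$ with $q(s)=\prod_{i=1}^k b(s-i,-s-m-i)$. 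By the $\mathcal{C}_m$-condition each factor is nonzero, so $q\neq0$ and $P\in\Sat_{\CC[s]}(I(s,-s-m))$.

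\emph{Main obstacle.} The delicate step is the passage from ``$\phi(u)=0$'' to ``$u=(s_1+s_2+m)w$ with $w$ in a module reachable from $D[s_1,s_2]f^{s_1}g^{s_2}$ by the iterated Bernstein--Sato relation''. This needs the divisibility to happen at the level of the numerator $h\in\CC[x,s_1,s_2]$ with a fixed denominator $(fg)^k$. It also needs the compatibility of the substitution with the $D$-action, which must be checked carefully since $s_2$ appears in the action of $\partial_i$. The shifts $i=1,\dots,k$ used are exactly those controlled by the $\mathcal{C}_m$-condition, which is why that condition is stated for all $i\ge1$.
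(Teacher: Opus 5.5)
Your proof is correct and follows essentially the same route as the paper: write $P(s_1)f^{s_1}g^{s_2}=\frac{h}{(fg)^k}f^{s_1}g^{s_2}$, use that $h(s,-s-m)=0$, multiply by $\prod_{i=1}^k b(s_1-i,s_2-i)$ and iterate the shifted functional equation, then specialize, with the $\mathcal{C}_m$-condition guaranteeing $q(s)\neq 0$. The differences are only cosmetic. You factor $h=(s_1+s_2+m)h'$ explicitly, whereas the paper just notes that the term $hR$ vanishes after substitution. You also treat $k=0$ uniformly via the empty product, and you spell out why the specialization map is compatible with the $D$-action, which the paper takes as clear.
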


\begin{proof}
Clearly $I(s,-s-m) \subseteq I_m(s)$. Then $\Sat_{\CC[s]}(I(s,-s-m)) \subseteq \Sat_{\CC[s]}(I_m(s)) = I_m(s)$,
since $I_m(s)$ is $\CC[s]$-saturated, see Proposition~\ref{prop:saturation}.

For the other inclusion take $P(s) \in I_m(s)$.
We need to find $\tilde{P}(s_1,s_2) \in I(s_1,s_2)$ such that $\tilde{P}(s,-s-m) = q(s) P(s)$ for some
$q(s) \in \CC[s] \setminus \{0\}$.
In order to do this we use the functional equation associated with the element $b(s_1,s_2) \in B_{f,g}$
satisfying the $\mathcal{C}_m$-condition,
\begin{equation}\label{eq:BSfg}
Q(s_1,s_2) f^{s_1+1} g^{s_2+1} = b(s_1,s_2) f^{s_1} g^{s_2},
\end{equation}
where $Q(s_1,s_2) \in D[s_1,s_2]$. Let us write
\begin{equation}\label{eq:hfgk}
P(s_1) f^{s_1} g^{s_2}
= \frac{h(s_1,s_2)}{(fg)^k} f^{s_1} g^{s_2}
\ \in \ \CC \Big[ x_1,\ldots,x_n,s_1,s_2,\frac{1}{fg} \Big] f^{s_1} g^{s_2}
\end{equation}
for some $h(s_1,s_2) \in \CC[x_1,\ldots,x_n,s_1,s_2]$ and $k \geq 0$.
Applying the substitution $s_1 = s$, $s_2=-s-m$ to \eqref{eq:hfgk}, we obtain
\[
P(s) \frac{1}{g^m} \left( \frac{f}{g} \right)^s
= \frac{h(s,-s-m)}{(fg)^k} \frac{1}{g^m} \left( \frac{f}{g} \right)^s.
\]
Since $P(s) \frac{1}{g^m} \left( \frac{f}{g} \right)^s = 0$,
we get
\begin{equation}\label{eq:hzero}
h(s,-s-m) = 0.
\end{equation}
% or equivalently
% \[
% h(s_1,s_2) = (s_2 + s_1 + m) \tilde{h}(s_1,s_2)
% \]
% for some $\tilde{h}(s_1,s_2) \in \CC[x_1,\ldots,x_n,s_1,s_2]$.

If $k = 0$, then it is enough to choose $\tilde{P}(s_1,s_2) = P(s_1) - h(s_1,s_2)$ and $q(s)=1$.
Note that $\tilde{P}(s_1,s_2) \in I(s_1,s_2)$ by~\eqref{eq:hfgk} and moreover, $\tilde{P}(s,-s-m) = q(s) P(s)$ holds by~\eqref{eq:hzero}.

% If $k=1$, then~\eqref{eq:hfgk} and~\eqref{eq:BSfg} shifted by $1$ in $s_1$ and $s_2$ give rise to
% \[
% \begin{aligned}
% b(s_1-1,s_2-1) P(s_1) f^{s_1} g^{s_2}
% &= b(s_1-1,s_2-1) \frac{h(s_1,s_2)}{fg} f^{s_1} g^{s_2} \\
% &= h(s_1,s_2) b(s_1-1,s_2-1) f^{s_1-1} g^{s_2-1} \\[4pt]
% &= h(s_1,s_2) Q(s_1-1,s_2-1) f^{s_1} g^{s_2}.
% \end{aligned}
% \]
% Choose $\tilde{P}(s_1,s_2) = b(s_1-1,s_2-1) P(s_1) - h(s_1,s_2) Q(s_1-1,s_2-1) \in I_m(s_1,s_2)$
% and $\tilde{P}(s,-s-m) = b(s-1,s-m-1) P(s)$ holds by~\eqref{eq:hzero}. Note that $b(s-1,s-m-1) \neq 0$ by assumption.

If $k\geq 1$, then we multiply \eqref{eq:hfgk} by $b(s_1-k,s_2-k)$ and obtain
\[
b(s_1-k,s_2-k) P(s_1) f^{s_1} g^{s_2} = h(s_1,s_2) b(s_1-k,s_2-k) f^{s_1-k} g^{s_2-k}.
\]
Applying the functional equation~\eqref{eq:BSfg} to the right-hand side of the previous equation, we get
\[
b(s_1-k,s_2-k) P(s_1) f^{s_1} g^{s_2} = h(s_1,s_2) Q(s_1-k,s_2-k) f^{s_1-(k-1)} g^{s_2-(k-1)}.
\]
Iterating this process $k-1$ times and using~\eqref{eq:hfgk} and~\eqref{eq:BSfg} as above, we arrive at the following functional equation
\begin{equation}\label{eq:new-function-eq}
\tilde{b}(s_1,s_2) P(s_1) f^{s_1} g^{s_2} = h(s_1,s_2) R(s_1,s_2) f^{s_1} g^{s_2},
\end{equation}
\[
\mbox{where }
\tilde{b}(s_1,s_2) = \prod_{i=1}^k b(s_1-i, s_2-i) \mbox{ and } R(s_1,s_2) = Q(s_1-k,s_2-k) \cdot \ldots \cdot Q(s_1-1,s_2-1).
\]
Finally we choose
\[
\begin{aligned}
\tilde{P}(s_1,s_2) &= \tilde{b}(s_1,s_2) P(s_1) - h(s_1,s_2) R(s_1,s_2), \\
q(s) &= \tilde{b}(s,-s-m) = \prod_{i=1}^k b(s-i, -s-m-i).
\end{aligned}
\]
Therefore by~\eqref{eq:new-function-eq} we conclude that $\tilde{P}(s_1,s_2) \in I(s_1,s_2)$,
and because of~\eqref{eq:hzero} $\tilde{P}(s,-s-m) = \tilde{b}(s,-s-m) P(s)$ follows.
Moreover, $q(s) = \tilde{b}(s,-s-m) \neq 0$ because we have chosen $b(s_1,s_2)$ satisfying the $\mathcal{C}_m$-condition.
\end{proof}

\begin{remark}
There is a classical conjecture in $D$-module theory by Ucha-Enríquez \cite[Section 4.4.1]{And14} stating that $\Ann_{D[s]}(fg)^s = I(s,s)$. As in the proof of
Theorem~\ref{thm:main-theorem} we can show that if there exists $b(s_1,s_2) \in B_{f,g}$ such that $b(s_1-i,s_2-i) \neq 0$
for all $i \geq 1$, or equivalently $s_2-s_1$ does not divide $b(s_1,s_2)$, then
\[
\Ann_{D[s]}(fg)^s = \Sat_{\CC[s]}(I(s,s)).
\]
Since the condition $(s_2-s_1) \nmid b(s_1,s_2)$ is always accomplished for a certain choice of
$b(s_1,s_2) \in B_{f,g}$~\cite{Sab87I, Gyo93},
the previous conjecture says that the ideal $I(s,s)$ need not be saturated. We were not able to find a counterexample to this
conjecture but we have observed that for our goal the saturation is needed, as the following example shows.
\end{remark}

\begin{ex}\label{ex:Reiffen-xy-m3}
Let us consider $f = x^4 + y^5 + xy^4$, $g = xy$, and $m=3$.
Applying the method explained in Section~\ref{sec:sat} to $J := I(s,-s-3)$,
we obtain that $\Sat_{\CC[s]}(J) = J:(s-1)(11s-15)$. In fact, the factor $11s-15$
is superfluous since $J:11s-15 = J$ and then the saturation is simply $J:s-1$
that it is strictly bigger than $J$. For instance the element
\[
\begin{aligned}
& 704xy^3 \partial_x^2 - 576xy^3\partial_x \partial_y + 560y^4 \partial_x \partial_y - 432y^4 \partial_y^2 - 6875xy^2 \partial_x^2 + 5325xy^2 \partial_x \partial_y - 5500y^3\partial_x \partial_y \\
& + 12x^2y \partial_y^2 - 26xy^2\partial_y^2 + 4040y^3\partial_y^2 + 1728xy^2\partial_xs - 2880xy^2\partial_x - 1536y^3\partial_x s + 4496y^3\partial_x \\
& + 2592y^3\partial_y s - 5616y^3\partial_y - 11550xy\partial_x s + 24675xy\partial_x + 15125y^2\partial_x s - 44000y^2\partial_x + 12x^2\partial_y s \\
& + 48x^2\partial_y - 37xy\partial_y s - 108xy\partial_y - 20570y^2\partial_y s + 50665 y^2\partial_y - 3888y^2 s^2 + 15552y^2 s \\
& - 15120y^2 - 11xs^2 - 37xs - 12x + 26015ys^2 - 116050ys + 129615y
%& 704xy^3\partial_x^2-576xy^3\partial_x\partial_y + 560y^4\partial_x\partial_y - 432y^4\partial_y^2 - 6875xy^2\partial_x^2
%+ 5325xy^2\partial_x\partial_y - 5500y^3\partial_x\partial_y \\
%& +12x^2y\partial_y^2 - 26xy^2\partial_y^2 + 4040y^3\partial_y^2 + 1728xy^2\partial_xs - 1536y^3\partial_xs
%+ 2592y^3\partial_ys - 2880xy^2\partial_x \\
%& +4496y^3\partial_x - 5616y^3\partial_y - 11550xy\partial_xs + 15125y^2\partial_xs + 12x^2\partial_ys
%- 37xy\partial_ys - 20570y^2\partial_ys \\
%& -3888y^2s^2 + 24675xy\partial_x - 44000y^2\partial_x + 48x^2\partial_y - 108xy\partial_y + 50665y^2\partial_y
%+ 15552y^2s - 11xs^2 \\
%& +26015ys^2 - 15120y^2 - 37xs - 116050ys - 12x + 129615y
\end{aligned}
\]
is in $J:s-1$ but it is not in $J$. Theorem~\ref{thm:main-theorem} tells us that if the $B_{f,g}$ satisfies
the $\mathcal{C}_3$-condition, then $I_3(s) = J:s-1$. However, we were not able to compute the Bernstein-Sato
ideal of $f,g$.
\end{ex}

The rest of this section is devoted to study the case where $B_{f,g}$ does not satisfy the $\mathcal{C}_m$-condition,
see Definition~\ref{defi:C-condition}. We need to introduce some notation and a preliminary result.

\begin{notation}\label{notation-syz}
Let $\{Q_1(s), \ldots, Q_r(s)\} \subset D[s]$ be a system of generators of a left ideal $I \subset D[s]$ and take $P(s) \in D[s]$. Consider $\pi : D[s]^{r+1} \to D[s]$ the canonical  projection onto the first factor.
By abuse of notation we denote by $\syz_{D[s]}(P(s),I)$ the left submodule 
\[\syz_{D[s]}(P(s),Q_1(s),\ldots,Q_r(s)) \subset D[s]^{r+1}.
\]
This module can also be described as the kernel of the homomorphism of left $D[s]$-modules defined by
\[
\varphi_{P(s)}: D[s] \longrightarrow D[s]/I, \qquad Q(s) \mapsto Q(s) P(s) + I.
\]
\end{notation}

\begin{lemma}\label{lemmma:syz}
Using Notation~\ref{notation-syz}, the following holds for all $i \geq 0$ :
\[
I_{k-i}(s) = \pi\left(\syz_{D[s]}\left(f^i,I_k\left(s-i\right)\right)\right).
\]

\end{lemma}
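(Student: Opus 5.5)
Note first that the identity is meant for $i \le k$, so that the index $k-i$ is a nonnegative integer as in the definition of $I_m(s)$.

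The plan is a direct computation in $M$ of the element $f^i \cdot \frac{1}{g^k}\left(\frac{f}{g}\right)^{s-i}$. Multiplication by the polynomial $f^i$ is part of the $D[s]$-action, and in $M$ we have
\[
f^i \frac{1}{g^k}\left(\frac{f}{g}\right)^{s-i} = \frac{f^i}{g^k}\cdot\frac{g^i}{f^i}\left(\frac{f}{g}\right)^{s} = \frac{1}{g^{k-i}}\left(\frac{f}{g}\right)^s .
\]
To justify this cleanly, I would set up the $D[s]$-module $M_{s-i}$ obtained from $M$ by the automorphism $s\mapsto s-i$ of $D[s]$. This module is identified with $M$ via $\left(\frac{f}{g}\right)^{s-i} = \left(\frac{f}{g}\right)^{-i}\left(\frac{f}{g}\right)^s$, where $\left(\frac{f}{g}\right)^{-i}=g^i/f^i$ is a unit in $\CC[x,s,\frac{1}{fg}]$. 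One checks that the action formulas for $\partial_j$ are compatible with this identification; the extra term from differentiating $(f/g)^{-i}$ exactly accounts for the shift $s \to s-i$ in the logarithmic-derivative terms. Under this identification $I_k(s-i)$, meaning the ideal $I_k(s)$ with $s$ replaced by $s-i$, is exactly $\Ann_{D[s]} \frac{1}{g^k}\left(\frac{f}{g}\right)^{s-i}$.

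Write $u = \frac{1}{g^k}\left(\frac{f}{g}\right)^{s-i}$. Then $D[s]u \cong D[s]/I_k(s-i)$ as left $D[s]$-modules via $Q \mapsto Qu$. Therefore
\[
Q \in \Ann_{D[s]}(f^i u) \iff Qf^i u = 0 \iff Qf^i \in I_k(s-i) \iff Q \in \ker \varphi_{f^i},
\]
with $\varphi_{f^i}$ as in Notation~\ref{notation-syz} for the ideal $I = I_k(s-i)$. Since $f^i u = \frac{1}{g^{k-i}}\left(\frac{f}{g}\right)^s$, the left-hand annihilator is precisely $I_{k-i}(s)$.

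It remains to identify $\ker\varphi_{f^i}$ with $\pi(\syz_{D[s]}(f^i, Q_1,\dots,Q_r))$, where $Q_1,\dots,Q_r$ generate $I_k(s-i)$. A tuple $(a_0,a_1,\dots,a_r)$ is a syzygy iff $a_0 f^i + \sum_j a_j Q_j = 0$. So $a_0$ lies in the projection iff $a_0 f^i \in D[s]\langle Q_1,\dots,Q_r\rangle = I_k(s-i)$, which is the kernel condition, and the two sets coincide.

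The only step needing care is the identification of the twisted module $M_{s-i}$ with $M$, namely that $\partial_j$ acts compatibly. This is a one-line chain-rule check: $\partial_j\big((f/g)^{-i}\big) = -i\big(\tfrac{\partial_j f}{f}-\tfrac{\partial_j g}{g}\big)(f/g)^{-i}$, which combines with the term $s\big(\tfrac{\partial_j f}{f}-\tfrac{\partial_j g}{g}\big)$ to give $(s-i)\big(\tfrac{\partial_j f}{f}-\tfrac{\partial_j g}{g}\big)$. I expect no real obstacle beyond this; everything else is formal.
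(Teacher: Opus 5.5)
Your proof is correct and follows essentially the same route as the paper. The identity $\frac{1}{g^{k-i}}\left(\frac{f}{g}\right)^s = f^i\,\frac{1}{g^k}\left(\frac{f}{g}\right)^{s-i}$ turns $P(s)\in I_{k-i}(s)$ into $P(s)f^i\in I_k(s-i)$, which is exactly membership in $\pi(\syz_{D[s]}(f^i,I_k(s-i)))$; your chain-rule check that $I_k(s-i)$ is the annihilator of $\frac{1}{g^k}\left(\frac{f}{g}\right)^{s-i}$ makes explicit a step the paper uses without comment.
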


\begin{proof}
Let us fix $\{Q_1(s),\ldots,Q_r(s)\}$ a system of generators of $I_k(s-i)$.
We need to prove that $I_{k-i}(s) = \pi(\syz_{D[s]}(f^i,Q_1(s),\ldots,Q_r(s)))$. 

Consider $P(s) \in I_{k-i}(s)$. By definition
\[
P(s) \frac{1}{g^{k-i}} \left( \frac{f}{g} \right)^s = 0,
\]
but $\frac{1}{g^{k-i}} \left( \frac{f}{g} \right)^s = f^i \frac{1}{g^k} \left( \frac{f}{g} \right)^{s-i}$
and hence $P(s) f^i \in I_k(s-i)$.
Then
\[
P(s) f^i = \sum_{i=1}^r R_i(s) Q_i(s),
\]
for some $R_i(s) \in D[s]$, $i=1,\ldots,r$. This implies
\[
(P(s),-R_1(s),\ldots,-R_r(s)) \in \syz_{D[s]} (f^i,Q_1(s),\ldots,Q_r(s)) \subset D[s]^{r+1}.
\]
The projection onto the first factor gives $P(s) \in \pi(\syz_{D[s]} (f^i,Q_1(s),\ldots,Q_r(s)))$.

To show the other inclusion note that all arguments hold in the other direction.
\end{proof}

\subsection{}\label{sec:L-discussion}
Recall that the goal of this section is to find an algorithm to compute $I_m(s)$ for all $m \in \ZZ$, $m \geq 0$.
In order to do this let us fix $b(s_1,s_2) \in \CC[s_1,s_2]$ a nonzero element of the Bernstein-Sato ideal $B_{f,g}$.
Consider the set
\[
L_{b(s_1,s_2)} := \Big\{ \ell \in \ZZ \mid s_1 + s_2 + \ell \mbox{ divides } b(s_1,s_2) \Big\}.
\]
It is finite (possibly empty) because $b(s_1,s_2) \neq 0$ cannot have infinitely many factors of the form $s_1 + s_2+\ell$.
By Remark~\ref{rem:C-condition}, $b(s_1,s_2)$ does not satisfy the $C_m$-condition if and only if
there exists $i \geq 1$ such that $m+2i \in L_{b(s_1,s_2)}$.

We distinguish three cases:

\begin{enumerate}
\renewcommand{\theenumi}{\alph{enumi}}
\item \label{L-item1} $L_{b(s_1,s_2)} = \emptyset$. Then $b(s_1,s_2)$ and $B_{f,g}$ satisfy the $\mathcal{C}_m$-condition for all $m \in \ZZ$, $m \geq 0$ and hence, by Theorem~\ref{thm:main-theorem}, $I_m(s)=\Sat_{\CC[s]}(I(s,-s-m))$.
\end{enumerate}
Now we define $e_{b(s_1,s_2)} = 1$ if $L_{b(s_1,s_2)} = \emptyset$ and $e_{b(s_1,s_2)} = \max L_{b(s_1,s_2)}$
otherwise.
\begin{enumerate}
\renewcommand{\theenumi}{\alph{enumi}}
\setcounter{enumi}{1}
\item \label{L-item2} $L_{b(s_1,s_2)} \neq \emptyset$ and $m \geq e_{b(s_1,s_2)} - 1$. Since $i \geq 1$, $m+2i \geq e_{b(s_1,s_2)} + 1$ and then
$m+2i$ cannot be in $L_{b(s_1,s_2)}$ because of the definition of $e_{b(s_1,s_2)}$. Therefore $B_{f,g}$ satisfies
the $\mathcal{C}_m$-condition and again by Theorem~\ref{thm:main-theorem}, $I_m(s)=\Sat_{\CC[s]}(I(s,-s-m))$.
\item \label{L-item3} $L_{b(s_1,s_2)} \neq \emptyset$ and $m < e_{b(s_1,s_2)} - 1$. In this case $b(s_1,s_2)$ does not satisfy
the $\mathcal{C}_m$-condition and we cannot apply Theorem~\ref{thm:main-theorem} directly.
Instead we use Lemma~\ref{lemmma:syz} with $i = e_{b(s_1,s_2)}-1-m > 0$ and $k = m+i$ to obtain
\begin{equation*}%\label{eq:Ims}
I_{m}(s) = \pi(\syz_{D[s]}(f^{e_{b(s_1,s_2)}-1-m},I_{e_{b(s_1,s_2)}-1}(s-e_{b(s_1,s_2)}+1+m))).
\end{equation*}
Note that the computation of the previous ideal $I_{e_{b(s_1,s_2)-1}}(s)$ is treated in~\eqref{L-item2}.
\end{enumerate}

\begin{remark}\label{rk:one-step}
Lemma~\ref{lemmma:syz} with $i=1$ and $k = e_{b(s_1,s_2)}-j$ allows us to compute $I_{e_{b(s_1,s_2)}-j-1}(s)$ from
$I_{e_{b(s_1,s_2)}-j}(s-1)$ for all $j \geq 1$. This gives an alternative method to case \eqref{L-item3} when
$e_{b(s_1,s_2)}-1-m$ is too large.
\end{remark}

\begin{remark}\label{rk:choice-b}
The choice of $b(s_1,s_2)$ from the system of generators of $B_{f,g}$ with the smallest $e_{b(s_1,s_2)}$
improves the efficiency of the computation of $I_m(s)$ along the previous discussion.
\end{remark}

We summarize this method in the following algorithm.

\begin{algorithm}\label{alg:ann}
\SetKwInOut{Input}{Input}
\SetKwInOut{Output}{Output}
\caption{\textsc{Annihilator} \Big(computes the annihilator of $\frac{1}{g^m} \left( \frac{f}{g} \right)^s$ in $D[s]$\Big)}
\Input{$f,g$ two polynomials in $\CC[x_1,\ldots,x_n]$; \\ $m \geq 0$ an integer.}
\Output{$\{P_1(s),\ldots,P_r(s)\} \subset D[s]$ a system of generators of $I_m(s) = \Ann_{D[s]} \frac{1}{g^m} \left( \frac{f}{g} \right)^s$.}
%\KwData{$n \geq 0$}
%\KwResult{$y = x^n$}
%$y \gets 1$\;
\Begin{
	$I(s_1,s_2):=$ Annihilator of $f^{s_1} g^{s_2}$ in $D[s_1,s_2]$\;
	$B_{f,g}:= \langle b_1(s_1,s_2), \ldots, b_r(s_1,s_1) \rangle$, Bernstein-Sato ideal of $f,g$ in $\CC[s_1,s_2]$\;
	$\varepsilon := \min \left\{ e_{b_i(s_1,s_2)} \mid i=1,\ldots,r \right\} - 1$
	\tcc*[r]{see Remark~\ref{rk:choice-b}}
	$\gamma := \varepsilon - m$\;
	\eIf{$\gamma \leq 0$}
	{
		\Return $\Sat_{\CC[s]} (I(s,-s-m))$ \tcc*[r]{cases \eqref{L-item1} and \eqref{L-item2} above}
	}
	{
		%$J := \Sat_{\CC[s]}(I(s-\gamma,-s-m))$\;
		%\Return $\pi(\syz_{D[s]}(f^{\gamma},J))$\;
		$J(s) := \Sat_{\CC[s]}(I(s,-s-\varepsilon))$ \tcc*[r]{equals $I_\varepsilon(s)$ due to case~\eqref{L-item2}}
		\Return $\pi(\syz_{\CC[s]}(f^\gamma,J(s-\gamma)))$ \tcc*[r]{case~\eqref{L-item3}}
	}
}
\end{algorithm}

\begin{ex}\label{ex:ann}
Let us consider $f = x^2 + y^2$ and $g = xy$. The purpose of this example is to compute $I_m(s)$ for all $m \in \ZZ$, $m \geq 0$.
The annihilator of $f^{s_1}g^{s_2}$ in $D[s_1,s_2]$ is generated by two operators
\begin{equation}\label{eq:PQ}
\begin{aligned}
% VL, esthetics: multiply P by -1
%P(s_1,s_2) &= -x \partial_x - y \partial_y + 2s_1 + 2s_2, \\
P(s_1,s_2) &= x \partial_x + y \partial_y - 2s_1 - 2s_2, \\
Q(s_1,s_2) &= xy^2 \partial_x - x^2y \partial_y + x^2s_2 - y^2 s_2,
\end{aligned}
\end{equation}
that is, $I(s_1,s_2) = D[s_1,s_2] \left\langle P(s_1,s_2), Q(s_1,s_2) \right\rangle$.
The Bernstein-Sato ideal of $f,g$ is generated by a single element
\begin{equation}\label{eq:BS-ideal-x2y2}
\begin{aligned}
b(s_1,s_2)  = \
& (s_1+1) (s_2+1) \cdot \\
& (s_1+s_2+1) (s_1+s_2+2) (s_1+s_2+3)  \cdot  \\
& (2s_1+2s_2+3) (2s_1+2s_2+5).
\end{aligned}
\end{equation}

Using the algorithm described in Section \ref{sec:sat} one checks that $I(s,-s-m)$ is $\CC[s]$-saturated for all $m \in \ZZ$, $m \geq 0$.
Following the notation in Section~\ref{sec:L-discussion} (see Notation~\ref{notation-syz} and Algorithm~\ref{alg:ann}) one obtains
\[
L_{b(s_1,s_2)} = \{1,2,3\}, \qquad e_{b(s_1,s_2)} = 3, \qquad \varepsilon = 2, \qquad \gamma = 2 - m.
\]
Hence $I_m(s) = \Sat_{\CC[s]}(I(s,-s-m)) = I(s,-s-m)$ for all $m \geq 2$.

It remains to compute $I_0(s)$ and $I_1(s)$. For $m=1$ we use
\[
I_1(s) = \pi (\syz_{D[s]}(f,I_2(s-1)))
\]
and obtain three generators $P(s,-s-1)$, $Q(s,-s-1)$, $R(s)$, where
\[
R(s) = y^2 \partial_x \partial_y - x y \partial_y^2 - y \partial_x s - x \partial_y s + y \partial_x - 2x \partial_y.
\]
Note that the new operator $R(s)$ is not in $I(s,-s-1)$ and then $I_1(s) \supsetneq I(s,-s-1)$.

For $m=0$ there are two ways to compute $I_0(s)$, namely, either $\pi (\syz_{D[s]}(f^2,I_2(s-2)))$ or $\pi (\syz_{D[s]}(f,I_1(s-1)))$,
see Remark~\ref{rk:one-step}. The ideal $I_0(s)$ is generated by three operators $P(s,-s)$, $Q(s,-s)$, $T(s)$, where
\[
T(s) = y \partial_x^4 \partial_y - y \partial_y^5 - \partial_x^4 s + 2 \partial_x^2 \partial_y^2 s - \partial_y^4 s - 4 \partial_x^2 \partial_y^2 - 4 \partial_y^4.
\]
Once again one verifies that $I_0(s) \supsetneq I(s,-s)$.
\end{ex}

\section{Bernstein-Sato polynomial of rational functions}\label{sec:BS}

Let $f,g \in \CC[x_1,\ldots,x_n]$ be two polynomials and $m \in \ZZ$, $m \geq 0$.
Takeuchi \cite{Tak23} showed that for a big enough $N$ there exists a nonzero polynomial $b(s) \in \CC[s]$
and $P_1(s),\ldots,P_N(s) \in D[s]$ such
\begin{equation}\label{eq:functional-eq}
b(s) \frac{1}{g^m} \left( \frac{f}{g} \right)^s =
\sum_{k = 1}^N P_k(s) \frac{1}{g^m} \left( \frac{f}{g} \right)^{s+k}.
\end{equation}
The set of polynomials $b(s)$ verifying a functional equation of the type \eqref{eq:functional-eq} is an ideal in $\CC[s]$.
We denote by $b_{\frac{f}{g},m}^{(N)}(s)$ its monic generator.
In this section we present an algorithm to compute $b_{\frac{f}{g},m}^{(N)}(s)$ for a fixed $N \geq 1$. Note that expression~\eqref{eq:functional-eq} can alternatively be written as
\[
\left( b_{\frac{f}{g},m}^{(N)}(s) - \sum_{k = 1}^N P_k(s) \frac{f^k}{g^k} \right) \frac{1}{g^m} \left( \frac{f}{g} \right)^s = 0.
\]
Cleaning the denominators in the first factor of the previous expression to get an element in $D[s]$ we obtain
an equivalent equation
\[
\left( b_{\frac{f}{g},m}^{(N)}(s) g^N - \sum_{k = 1}^N P_k(s) f^k g^{N-k} \right) \frac{1}{g^{m+N}} \left( \frac{f}{g} \right)^s = 0.
\]
In Section~\ref{sec:annhilator}, we have provided a method to compute the annihilator
\[
I_m(s) := \Ann_{D[s]} \frac{1}{g^m} \left( \frac{f}{g} \right)^s.
\]
Using this notation we have
\begin{equation}\label{eq:BernsteinGN}
b_{\frac{f}{g},m}^{(N)} (s) g^N \in J \quad \mbox{where} \quad
J = I_{m+N}(s) + \sum_{k = 1}^N D[s] f^k g^{N-k}.
\end{equation}
Following the ideas in the proof of Lemma~\ref{lemmma:syz} and recalling Notation~\ref{notation-syz},
one finally gets the expression
\begin{equation}\label{eq:intersection-Cs}
%\left( ( D[s] J \cap \CC[x_1,\ldots,x_n,s] ) : g^N \right) \cap \CC[s] = \CC[s] \left\langle b_{\frac{f}{g},m}^N(s) \right\rangle,
\pi(\syz_{D[s]}(g^N,J)) \cap \CC[s] = \Big\langle b_{\frac{f}{g},m}^{(N)}(s) \Big\rangle.
\end{equation}

Note that~\eqref{eq:intersection-Cs} provides a method for computing the Bernstein-Sato polynomial of $\frac{f}{g}$, once a system of generator of $I_{m+N}(s) \subset D[s]$ has been obtained, see Section~\ref{sec:annhilator}.

\begin{ex}
Consider $f = x^2 + y^2$ and $g = xy$ as in Example \ref{ex:ann}, where the annihilator of $\frac{1}{g^m}\left(\frac{f}{g}\right)^s$
was computed for all $m \geq 0$. Recall that
\[
I_m(s) = D[s] \langle P(s,-s-m), Q(s,-s-m) \rangle,
\]
for all $m \geq 2$, where $P(s_1,s_2)$ and $Q(s_1,s_2)$ are the operators given in~\eqref{eq:PQ}.

Now we set $m=1$ and $N=1$. Following~\eqref{eq:BernsteinGN} and~\eqref{eq:intersection-Cs},
we first need to consider the ideal $D[s]g + J$, where $J$ is generated by $P(s,-s-2),Q(s,-s-2)$ and $f$. The syzygy module of the tuple $(g,P(s,-s-2),Q(s,-s-2),f)$ is generated by the rows of the following matrix
\[
\begin{pmatrix}
y\partial_x-x\partial_y&  y^2&0&       y\partial_y+1 \\        
x\partial_x+y\partial_y+2&xy&0&       0 \\  
x^2+y^2&    0&  0&       -xy \\ 
-y\partial_x+x\partial_y& x^2&0&       x\partial_x+1 \\
y^2 \partial_x-xy\partial_y-2xs-2x &     0&  -y&      ys+y \\
xy\partial_x+y^2\partial_y+2ys+4y &     0&  -x&      -xy\partial_y-xs-2x \\
%0&          0&  -x^2-y^2& xy^2\partial_x-x^2y\partial_y-x^2s+y^2s-2x^2+2y^2
\end{pmatrix}.
\]
The ideal generated by the elements of the first column can also be generated by
\[
 ys+y, \
 xs+x, \
 y\partial_x-x\partial_y, \
 x\partial_x+y\partial_y+2, \
 x^2+y^2.
\]
and then the intersection with $\CC[s]$ is zero, that is, $b_{\frac{f}{g},1}^{(1)}(s) = 0$.
This phenomenon has to do with the fact that the generator of $B_{f,g}$
verifies that $b(s,-s-m-2) = b(s,-s-3) = 0$, see~\eqref{eq:BS-ideal-x2y2},
cf.~Oaku's remark in \cite[p.~717]{Tak23}. In this case at least two terms are needed in Theorem~\ref{thm:Takeuchi}, i.e.,
two operators $P_1(s)$ and $P_2(s)$ in the functional equation \eqref{eq:functional-N} are nontrivial.
Note that the number of operators needed to obtain the Bernstein-Sato polynomial can be very large,
see Example~\ref{ex:largeN}.

For $m=1$ and $N=2$ the functional equation is given by
\[
(s+1) \frac{1}{g} \left( \frac{f}{g} \right)^s =
P_1(s) \frac{1}{g} \left( \frac{f}{g} \right)^{s+1} +
P_2(s) \frac{1}{g} \left( \frac{f}{g} \right)^{s+2},
\]
where
\[
\begin{aligned}
P_1(s) &= \frac{1}{4}y^4\partial_x^3\partial_y - \frac{1}{4}xy^3\partial_x^2\partial_y^2 + \frac{1}{4}y^4\partial_x\partial_y^3
- \frac{1}{4}xy^3\partial_y^4 - \frac{1}{2}y^3\partial_x^3s + \frac{1}{2}xy^2\partial_x^2\partial_ys - \frac{1}{4}y^3\partial_x^3 \\
& \quad + \frac{1}{2}xy^2\partial_x^2\partial_y + \frac{3}{4}y^3\partial_x\partial_y^2 - \frac{1}{2}xy^2\partial_y^3 + 2y^2\partial_x\partial_ys + \frac{15}{4}y^2\partial_x\partial_y + \frac{1}{4}xy\partial_y^2 + \frac{1}{2}y\partial_xs
+ \frac{5}{4}y\partial_x, \\[5pt]
P_2(s) &= -\frac{1}{4}y^3\partial_x^2\partial_ys - \frac{1}{4}y^3\partial_y^3s - \frac{1}{2}y^3\partial_x^2\partial_y - \frac{1}{2}y^3\partial_y^3 - \frac{1}{2}y^2\partial_y^2s - y^2\partial_y^2 + \frac{1}{4}y\partial_ys + \frac{1}{2} y\partial_y.
\end{aligned}
\]
Hence the Bernstein-Sato polynomial of $\frac{f}{g}$ for $m=1$ is $s+1$.
\end{ex}

Alternatively, if one knows in advance that $b_{\frac{f}{g},m}^{(N)}(s)$ is not zero, there is a better approach for the computation of the Bernstein-Sato polynomial. Indeed, \eqref{eq:BernsteinGN} says that
\[
\overline{b_{\frac{f}{g},m}^{(N)} (s) g^N} = \overline{0} \in \frac{D[s]}{J}.
\]
Then the set
\[
\left\{ \overline{1 \cdot g^N}, \overline{s \cdot g^N}, \overline{s^2 \cdot g^N}, \ldots \right\}
\]
is linearly dependent in $\frac{D[s]}{J}$ and the linear relation with minimal degree in $s$ is precisely the Bernstein-Sato polynomial of $\frac{f}{g}$.
This approach is much more efficient that the one given by formula~\eqref{eq:intersection-Cs}. By contrast,
if $b_{\frac{f}{g},m}^{(N)}(s)$ is zero, then the set $\big\{ \overline{s^i \cdot g^N} \big\}_{i \geq 0}$
is linearly independent.

We finish this section presenting an algorithm for computing $b_{\frac{f}{g},m}^{(N)}(s)$ assuming it is not zero.
Its correctness follows from the previous discussion.

\begin{algorithm}\label{alg:BSmero}
\SetKwInOut{Input}{Input}
\SetKwInOut{Output}{Output}
\caption{\textsc{Bernstein-Sato} \Big(computes $b_{\frac{f}{g},m}^{(N)}(s)$ assuming it is not zero\Big)}
\Input{$f,g$ two polynomials in $\CC[x_1,\ldots,x_n]$;\\ $m \geq 0$ and $N \geq 1$ integers;\\ $<$ a monomial order on $D[s]$.}
\Output{$b_{\frac{f}{g},m}^{(N)}(s) \in \CC[s]$, a monic generator of the ideal defined by~\eqref{eq:functional-eq}.}
\Begin{
	$I:=I_{m+N}(s)$ \tcc*[r]{use Algorithm~\ref{alg:ann}}
	$J:= I + D[s] \langle fg^{N-1}, f^2g^{N-2},\ldots,f^N \rangle$ \tcc*[r]{see \eqref{eq:BernsteinGN}}
	$G:=$ Gr\"obner basis of $J$ with respect to $<$ \;
	$P_0:=NF(g^N,G)$ \tcc*[r]{the remainder of the division of $g^N$ by $G$}
	\If{$P_0=0$}
	{
		\Return $b=1$\;
	}
	$k:=1$\;
	\While{true}
	%{$(1=1)$} 
	{
		$P_k = NF(s^kg^N,G)$\;
		\If{$\exists a_0,\ldots,a_{k} \in \QQ$ (with $a_k=1$) such that $a_0 P_0 + \ldots + a_k P_k = 0$}
		{
			\Return $b = a_0 + a_1 s + \ldots + a_k s^k$ \tcc*[r]{by the assumption $b_{\frac{f}{g},m}^{(N)}(s) \neq 0$}
		}
		$k := k+1$\;
	}
}
\end{algorithm}

\section{Examples}\label{sec:examples}

\begin{ex}\label{ex:largeN}
Let us consider $f = x^2 + y^3$ and $g = xy$. For all $m \geq 0$ the ideal $I(s,-s-m)$ is $\CC[s]$-saturated
and the Bernstein-Sato ideal satisfies the $\mathcal{C}_m$-conditions.
Then, by Theorem~\ref{thm:main-theorem}, $I_m(s) = I(s,-s-m)$ for all $m\geq 0$.

Running Algorithm~\ref{alg:BSmero} for $m = 0$ one obtains
\[
\begin{aligned}
b_{\frac{f}{g},0}^{(1)} (s) &= (s+1)(s+5)(s+7), \\
b_{\frac{f}{g},0}^{(N)} (s) &= (s+1)(s+5), \quad N = 2,\ldots,5, \\
b_{\frac{f}{g},0}^{(6)} (s) &= s+1.
\end{aligned}
\]
In this case we need six differential operators to achieve the Bernstein-Sato polynomial
of $\frac{f}{g}$ which is $s+1$, that is, $b_{\frac{f}{g},0} (s) = b_{\frac{f}{g},0}^{(N)} (s)$ for all $N \geq 6$.

On the other hand, for $m=1$ Algorithm~\ref{alg:BSmero} yields
\[
\begin{aligned}
b_{\frac{f}{g},1}^{(1)} (s) &= s(s+1)(s+2), \\
b_{\frac{f}{g},1}^{(N)} (s) &= s(s+1), \quad N = 2,\ldots,20.
\end{aligned}
\]
However, we could not verify whether $b_{\frac{f}{g},1}^{(N)}(s) = s+1$ for $N$ big enough as in case $m=0$.
\end{ex}

In the next example we will use the eigenvalues of the monodromy of a rational function \cite{MR1647824,MR1734347}
to determine the Bernstein-Sato polynomial.

\begin{ex}
Let $f=x^2+y^3$, $g = x$, and $m=4$. The Bernstein-Sato ideal satisfies the $\mathcal{C}_m$-condition and
the ideal $I(s,-s-m)$ is $\CC[s]$-saturated for all $m\geq 0$. According to Theorem~\ref{thm:main-theorem}
$I_m(s) = I(s,-s-m)$ for all $m \geq 0$.
Algorithm~\ref{alg:BSmero} for $N=1$ produces
\begin{equation}\label{eq:tres-roots}
b_{\frac{f}{g},4}^{(1)}(s) = \Big(s+1\Big) \Big(s-\frac{7}{3}\Big) \Big(s-\frac{5}{3}\Big).
\end{equation}
Note that not all the roots are negative by contrast with the classical case \cite{Kas76}.

We plan to use the equality \eqref{eq:thm-eigen} to show that the polynomial in~\eqref{eq:tres-roots}
is in fact the Bernstein-Sato polynomial of $\frac{f}{g}$. A resolution $\pi: X \to (\CC^2,0)$ of the
singularity of the quotient can be achieved with four blow-ups.
The final situation is illustrated in Figure~\ref{fig:resolution} where the multiplicities
of $f$ and $g$ along the exceptional divisors are shown.
By \cite[Theorem 1]{MR1647824} the local monodromy zeta functions of the rational function are
\[
Z_{{\rm mon},\frac{f}{g}}(t)_0 = \frac{1-t}{1-t^3} = \frac{1}{t^2+t+1},
\qquad Z_{{\rm mon},\frac{f}{g}}(t)_a = 1-t, \quad a \in V(f) \setminus \{0\},
\]
and then the eigenvalues are all the roots of unity of order $3$.
Equality \eqref{eq:thm-eigen} implies that $b_{\frac{f}{g},4}(s)$ has exactly $3$ different
roots modulo $\ZZ$. Since $b_{\frac{f}{g},4}(s)$ divides $b_{\frac{f}{g},4}^{(N)}(s)$
which in turn divides $b_{\frac{f}{g},4}^{(1)}(s)$ for all $N\geq 1$ and the latter
has already $3$ different roots modulo $\ZZ$, it follows that they all must be equal.

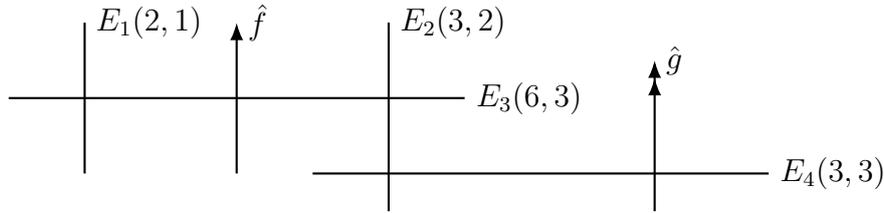
\begin{figure}[ht]
\begin{tikzpicture}[scale=1]
%\draw[help lines,step=1] (0,0) grid (6,3);
\draw[thick] (0,1.5) -- (6,1.5) node[right]{$E_3(6,3)$}; 
\draw[thick] (1,0.5) -- (1,2.5) node[right]{$E_1(2,1)$};
\draw[-{Latex[]},thick] (3,0.5) -- (3,2.5) node[right]{$\hat{f}$};
\draw[thick] (5,0) -- (5,2.5) node[right]{$E_2(3,2)$};
\draw[thick] (4,0.5) -- (10,0.5) node[right]{$E_4(3,3)$};
\draw[-{Latex[] Latex[]},thick] (8.5,0) -- (8.5,2) node[right]{$\hat{g}$};
\end{tikzpicture}
\caption{Resolution of $\frac{x^2+y^3}{x}$.}
\label{fig:resolution}
\end{figure}

The resolution help us also to get a bound for the roots of the Bernstein-Sato polynomial
according to \cite[Theorem 4.1]{Tak23}. More precisely, all the roots of the polynomial~\eqref{eq:tres-roots}
are contained in the set
%Viktor: \ell is nicer than l
\[
B_{\frac{f}{g},m}^{\pi} = \left\{ m - \frac{1}{3} - \ell, \ m-\frac{2}{3}-\ell, \ 2m-1-\ell \ \Big| \ \ell \in \ZZ, \, \ell \geq 0 \right \}.
\]
Notice that the elements of the form $m-\frac{1}{3}-\ell$ and $m-\frac{2}{3}-\ell$ come from $E_3$ while $2m-1-\ell$ comes from $E_2$.

Heuristically we have observed that
\begin{equation*}
b_{\frac{f}{g},m}^{(N)}(s) = \Big(s+1\Big) \Big(s-m+\frac{5}{3}\Big) \Big(s-m+\frac{7}{3}\Big).
\end{equation*}
However, for $m =0,1,2,3$, we cannot use \eqref{eq:thm-eigen} and conclude that the Bernstein-Sato polynomial
of the rational function has really three factors.

It is worth mentioning that the weak monodromy conjecture for plane rational functions was proven in~\cite{GL14}.
In this case the local topological zeta function at the origin of $\frac{f}{g}$ is
\[
Z_{\textrm{top},\frac{f}{g}}(s)_o = \frac{11s + 15}{12 \big(s+1\big)\big(s+\frac{5}{3}\big)}.
\]
In the context of the monodromy conjecture this example suggests that it is natural to relate the poles of
$Z_{\textrm{top},\frac{f}{g}}(s)_o$ with the roots of $b_{\frac{f}{g},0}(s)$, that is, when $m=0$.
\end{ex}

\begin{ex}
Consider $f = x^6 + y^6 + 2zx^3y^3$ and $g = z^2$. The Bernstein-Sato ideal of $f,g$ was computed in~\cite{BO10}
\[
B_{f,g} = b(s_1,s_2) \langle 3s_1+5, 2s_2+3 \rangle,
\]
where
\[
\begin{aligned}
b(s_1,s_2) = \, & (s_1+1)^2 (2s_1+1) (2s_1+3) (3s_1+1) (3s_1+2) \cdot \\
& (3s_1+4) (6s_1+5) (6s_1+7) (s_2+1) (2s_2+1)
\end{aligned}
\]
Then $B_{f,g}$ satisfies the $\mathcal{C}_m$-condition for all $m \geq 0$
and by Theorem~\ref{thm:main-theorem},
\[
I_m(s) = \Sat_{\CC[s]}(I(s,-s-m)), \ \forall m \geq 0.
\]

Using the methods from Section~\ref{sec:sat}, we have checked that
the ideal $I(s,-s-m)$ is $\CC[s]$-saturated for all $m \geq 2$ while $I(s,-s-1)$ is not.
In fact,
\[
I_1(s) = \Sat_{\CC[s]}(I(s,-s-1)) = I(s,-s-1): \Big(s+\frac{1}{2}\Big) \supsetneq I(s,-s-1).
\]

Algorithm~\ref{alg:BSmero} for $m = 0$ and $N = 1,\ldots,4$ yields
\[
b_{\frac{f}{g},0}^{(N)}(s) = 
\Big(s+1\Big)^2
\Big(s+\frac{1}{2}\Big)
\Big(s+\frac{3}{2}\Big)
\Big(s+\frac{1}{3}\Big)
\Big(s+\frac{2}{3}\Big)
\Big(s+\frac{4}{3}\Big)
\Big(s+\frac{5}{6}\Big)
\Big(s+\frac{7}{6}\Big).
\]
Note that $s+\frac{1}{2}$ remains as a factor of $b_{\frac{f}{g},0}^{(N)}(s)$, although it plays a role in the saturation of $I(s,-s-1)$.
\end{ex}

\bibliographystyle{amsplain}
\bibliography{ref_BS}

\end{document}